\newtheorem{thm}{Theorem}
\newtheorem{lemma}[thm]{Lemma}
\newtheorem{cor}[thm]{Corollary}
\def\beq{ \begin{equation} }
\def\eeq{ \end{equation} }
\def\square{\vcenter{\vbox{\hrule height .4pt
  \hbox{\vrule width .4pt height 5pt \kern 5pt
        \vrule width .4pt} \hrule height .4pt}}}
\newcommand{\f}{\frac}
\newcommand{\OZ}{\vec{\mathbb Z}}
\DeclareMathOperator{\Bin}{Bin}
\newcommand{\0}{\mathbf 0}
\title{Coexistence in chase-escape}
\author{Rick Durrett, Matthew Junge, and Si Tang}
\begin{document}

\maketitle  
 
\begin{abstract}
We study a competitive stochastic growth model called chase-escape in which red particles spread to adjacent uncolored sites and blue only to adjacent red sites. Red particles are killed when blue occupies the same site. If blue has rate-1 passage times and red rate-$\lambda$, a phase transition occurs for the probability red escapes to infinity on $\mathbb Z^d$, $d$-ary trees, and the ladder graph $\mathbb Z \times \{0,1\}$. The result on the tree was known, but we provide a new, simpler calculation of the critical value, and observe that it is a lower bound for a variety of graphs. We conclude by showing that red can be stochastically slower than blue, but still escape with positive probability for large enough $d$ on oriented $\mathbb Z^d$ with passage times that resemble Bernoulli bond percolation. 
\end{abstract}

\section{Introduction}

\emph{First-passage percolation} is a stochastic growth model that can be used to describe the spread of species. The classic first passage percolation model on a connected graph $G=(V, E)$ assumes that each edge $e$ is  assigned a random variable $t_{e}$ independently drawn from a common nonnegative distribution $F$, and these random variables represent the passage time needed to traverse the edges.  Specifically, for any edge $e = (x, y) \in E$, if at time $t_{0}$, a particle occupies site $x$, then after time $t_{x,y}$, the particle at $x$ places a copy of itself at $y$, provided $y$ is empty at time $t_{0}+t_{x,y}^{-}$.
  
For any two vertices $x, y \in V$,  define $T(x, y) = \inf_{\gamma: x\to y} \sum_{e\in \gamma} t_{e}$
to be the shortest time needed for a particle at $x$ to reach $y$. When $G=\mathbb Z^{d}$, the ball $D_{t} := \{\mathbf x \in \mathbb R^{d}: T(\mathbf 0, \lfloor \mathbf x \rfloor) \le t\}$ has a limit shape \cite{cox1981}, where $\lfloor \mathbf x \rfloor \in \mathbb Z^{d}$ is the nearest lower-left lattice point for $\mathbf x\in \mathbb R$. Specifically, under a mild moment condition on $F$,  there exists a convex, axis-symmetric set $\mathcal D_{F}\subseteq \mathbb R^{d}$ such that for any $\epsilon >0$,
\begin{align}
	P\left((1-\epsilon) \mathcal D_F \subseteq \f{D_t}t \subseteq (1+\epsilon)\mathcal D_{F}, \text{ for all $t$ sufficiently large} \right) = 1.\label{eq:shape}
\end{align}
Understanding $\mathcal D_F$ and the boundary fluctuations of $D_{t}$ remains a challenging area of research \cite{fpp}. Even the Markov setting in which the passage times are unit exponential random variables ($F(x) = 1 - e^{-x}$) known as the \emph{Richardson growth model} is far from being satisfactorily understood \cite{richardson}.

There are various extensions of the first-passage percolation model to allow two types of particles (say, red and blue) to spread and interact \cite{bernoulli1,bernoulli2, 2typesZ, pemantle, hoffman}. In these extensions, each edge $e$ is assigned two passage times $(t_{e}^{R}, t_{e}^{B})$, one for red and one for blue. A rule is imposed to define the interaction of the two types. For example, we 
may declare that the first color to reach a site occupies it for all time and blocks the other type from expanding through the site. In this case, if one color is surrounded by the other, then it cannot grow any further. Coexistence in this setting means that red and blue particles can both reach infinity. H\"aggstr\"om and Pemantle \cite{pemantle} demonstrated that this occurs with positive probability on $\mathbb Z^2$ for the special case where  $t_{e}^R$ and $t_{e}^{B}$ are both exponential random variables with equal rates. Hoffman \cite{hoffman} extended this to the four-color setting. Coexistence is especially relevant here, because it implies the existence of multiple geodesics in ordinary first passage percolation \cite{fpp}.
 Note that it is conjectured that coexistence cannot occur if red and blue rates are different \cite[Section 6]{fpp}. 

We consider a two-type spatial growth process called {\em chase-escape}. 
In this process, red particles only spread over the empty sites using the passage times $t_{e}^{R}$, and blue particles take over the red sites using the passage time $t_{e}^{B}$. More precisely, for any edge $(x, y)\in E$, (i) if there is some $t_{0}\ge 0$ so that site $x$ is red and site $y$ is empty during time period $[t_{0}, t_{0}+t_{x,y}^{R})$, then $y$ turns red at time $t_{0}+t_{x, y}^{R}$; (ii) if there is some $t_{0}\ge 0$ so that site $x$ is blue and site $y$ is red during time period $[t_{0}, t_{0}+t_{x,y}^{B})$, then $y$ turns blue at time $t_{0}+t_{x, y}^{B}$.  

%
We learned of this process from Lalley. It is inspired by the spread of two species---host and parasite---through an environment. For example, brush may spread to neighboring empty patches of soil while an infection transmits among the roots. It is interesting to ask how the environment (graph) and spreading rates affect the coexistence of both species. A closely related variant of chase-escape was studied by Kordzakhia on trees \cite{tree1}. Later, Kortchemski considered the process on trees and the complete graph \cite{tree_chase, complete}. Chase-escape can also be viewed as scotching a rumor. This interpretation was studied by Bordenave in \cite{rumor}. The continuous limit of rumor scotching is known as the assassination process and was considered many years earlier by Aldous and Krebs \cite{aldous}.


Unless stated otherwise, we always assume that $t_{e}^R$ is sampled from a rate-$\lambda$ exponential distribution (denoted $\text{Exp}(\lambda)$), with  $t_{e}^B$ from a rate-$1$ exponential distribution.  We let $P_\lambda( \cdot)$ be the probability measure on chase-escape with these passage times. Let $A= A(G)$ be the event that red and blue particles coexist at all times on the graph $G$. For simplicity, we always assume that $G$ is rooted at a vertex $\rho$ and consider the initial configuration, where a red particle is at $\rho$ and a blue particle is at an auxiliary vertex $\mathfrak b \notin G$ attached to $\rho$ by an edge $\mathfrak e$.  We say that \emph{coexistence} occurs if $P_\lambda(A) >0$. Figure \ref{fig:cande1} shows three realizations of chase-escape on $\mathbb Z^2$. On a given graph, it is natural to ask how $\lambda$ affects coexistence. Define 
$$\lambda_c(G):= \sup \{ \lambda \colon P_\lambda(A) = 0\}$$
to be the fastest red expansion rate for which coexistence does not occur. Note that it is not obvious how to prove that $P_\lambda(A)$ is monotonic in $\lambda$, thus it is not clear if $\lambda_c(G)$ is also equal to $\inf \{ \lambda \colon P_\lambda(A) >0\}$. This is discussed in more detail at the end of this section.

As mentioned earlier, a similar model called \emph{escape} was studied first by Kordzakhia on homogeneous trees \cite{tree1}. In the escape model, blue can also spread to empty sites. Note that when the underlying graph is a tree, the survival of red is equivalent in either escape or chase-escape. Kordzakhia gave an explicit formula for $\lambda_c(\mathbb T_d)$, but did not work out whether survival occurs when $\lambda= \lambda_c(\mathbb T_d)$. This was answered later by Bordenave \cite{bordenave2014extinction} in the more general setting of Galton-Watson trees. He proved that red does not survive on any Galton-Watson tree with mean degree $d$ at criticality. This includes the special case of a $d$-ary tree. Kortchemski also studied the process on trees and made some progress at describing  the number of surviving particles at each level \cite{tree_chase}. We provide a much simpler calculation of $\lambda_c(\mathbb T_d)$ in Theorem \ref{thm:tree} that includes the behavior at criticality. Our main interest, however, is in the process on graphs with cycles.

The shape theorem at \eqref{eq:shape} implies that for $G=\mathbb Z^{d}$ and $\rho =\mathbf 0:= (0, \ldots, 0)$ 
\begin{align}\lambda_c(\mathbb Z^d) \leq 1,  \text{ for all $d \geq 1$} .\label{eq:1}
\end{align}
To see this, suppose $\lambda > 1$ and let $\mathcal D_{1}$ be the limit shape of the one-type Richardson growth model. If red and blue were to grow over the empty sites on two $\mathbb Z^{d}$ lattices separately, starting from $\mathbf 0$ and at rates $\lambda$ and $1$, respectively, then, for $\epsilon = (\lambda-1)/3$,  there exists some $T$ such that
\begin{equation}
\label{eqn:zdcriticallambda}
 B(t)\subseteq (1+\epsilon)t\mathcal D_{1} \subsetneq(1+2\epsilon)t\mathcal D_{1} \subseteq R(t), \ \text{for all } t > T,
\end{equation}
where $R(t)$ (resp.\ $B(t)$) denote the sites that can be reached by red (resp.\ blue) particles by time $t$. Recall that in the chase-escape model, we always assume that at time $t=0$, a blue particle is attached to the red particle at $\mathbf 0$ by a special edge $\mathfrak e$. Thus, red survives whenever $t_{\mathfrak e}^{B} > T$, where $T$ is the special time such that \eqref{eqn:zdcriticallambda} holds, and the event $\{t_{\mathfrak e}^{B}>T\}$ occurs with positive probability.

\begin{figure}
\begin{center}
\begin{center}
\includegraphics[width = 3.5 cm]{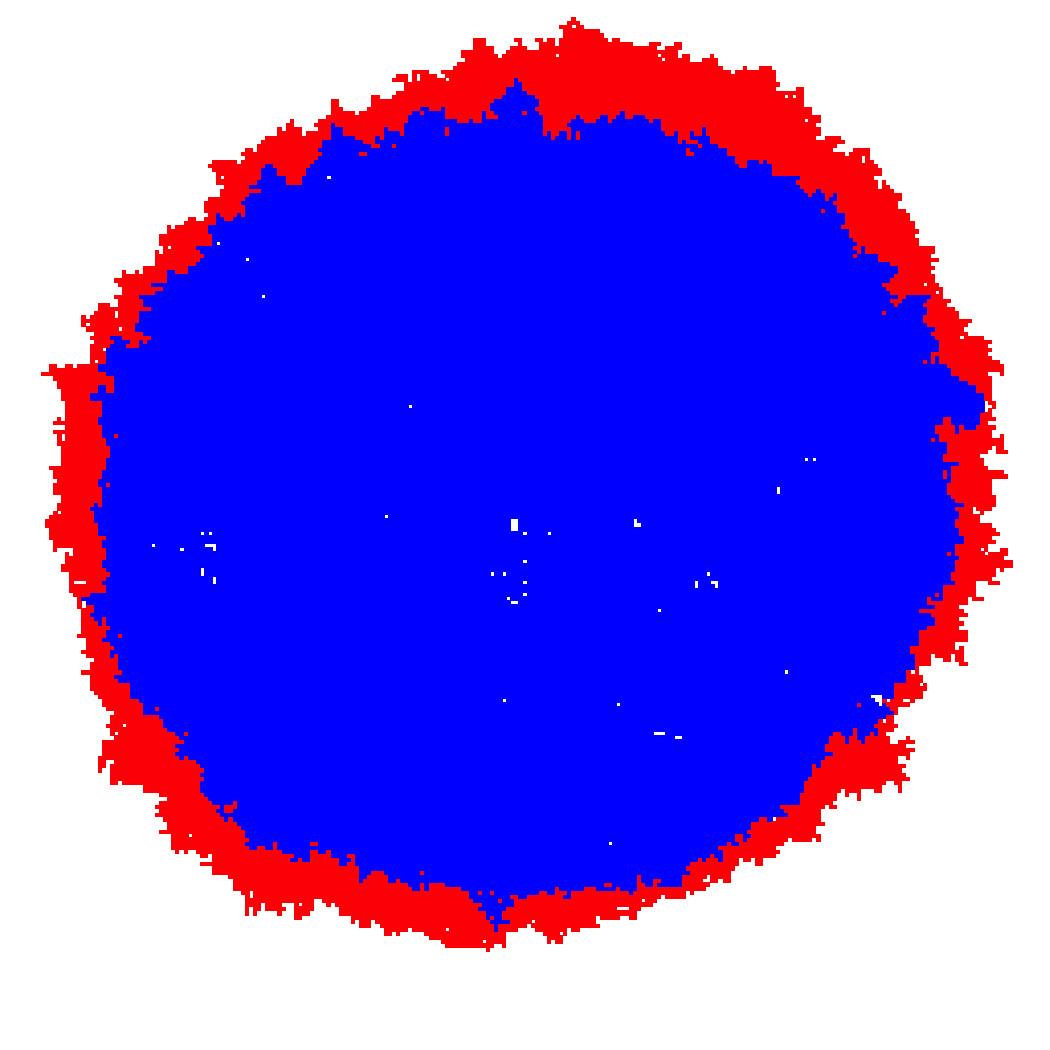} \qquad 
\includegraphics[width = 3.5 cm]{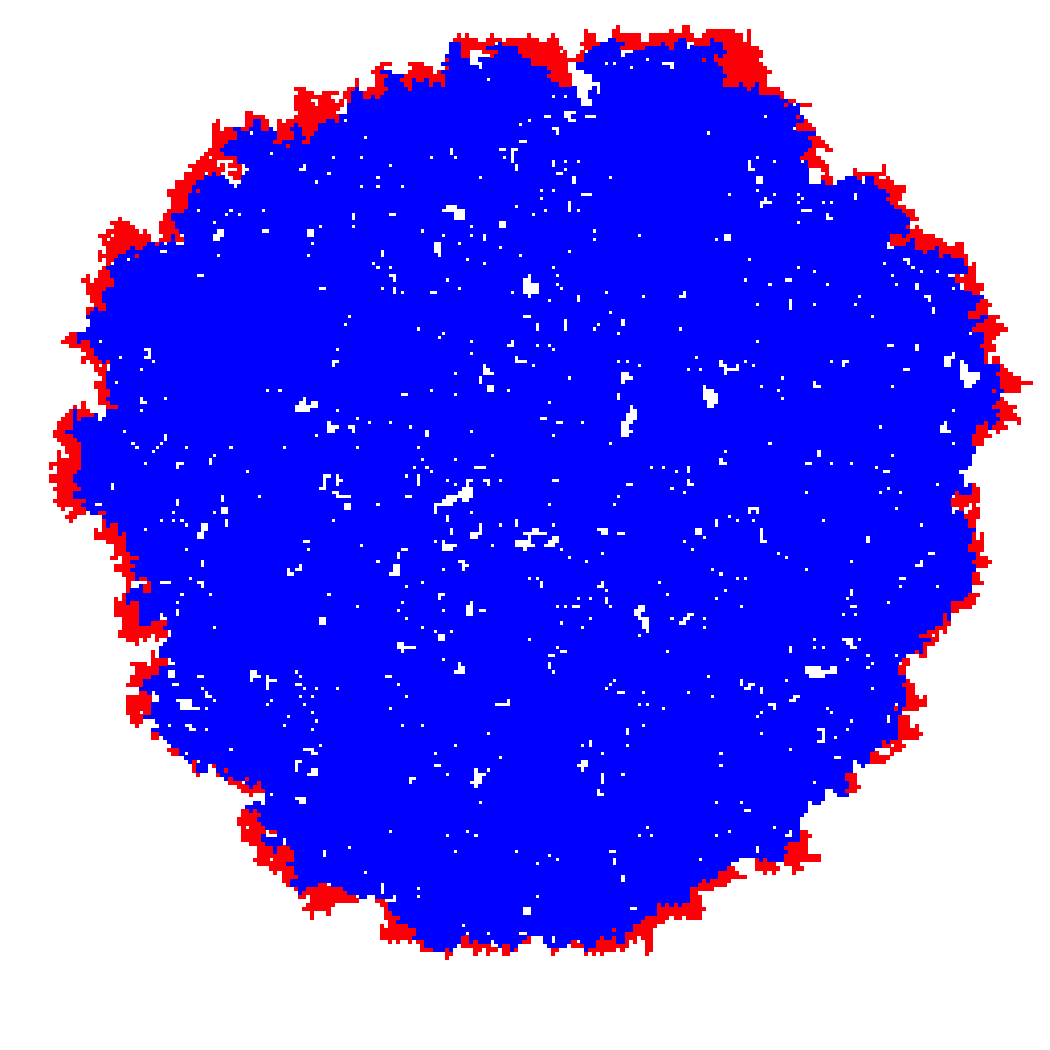} 
\includegraphics[width = 3.5cm]{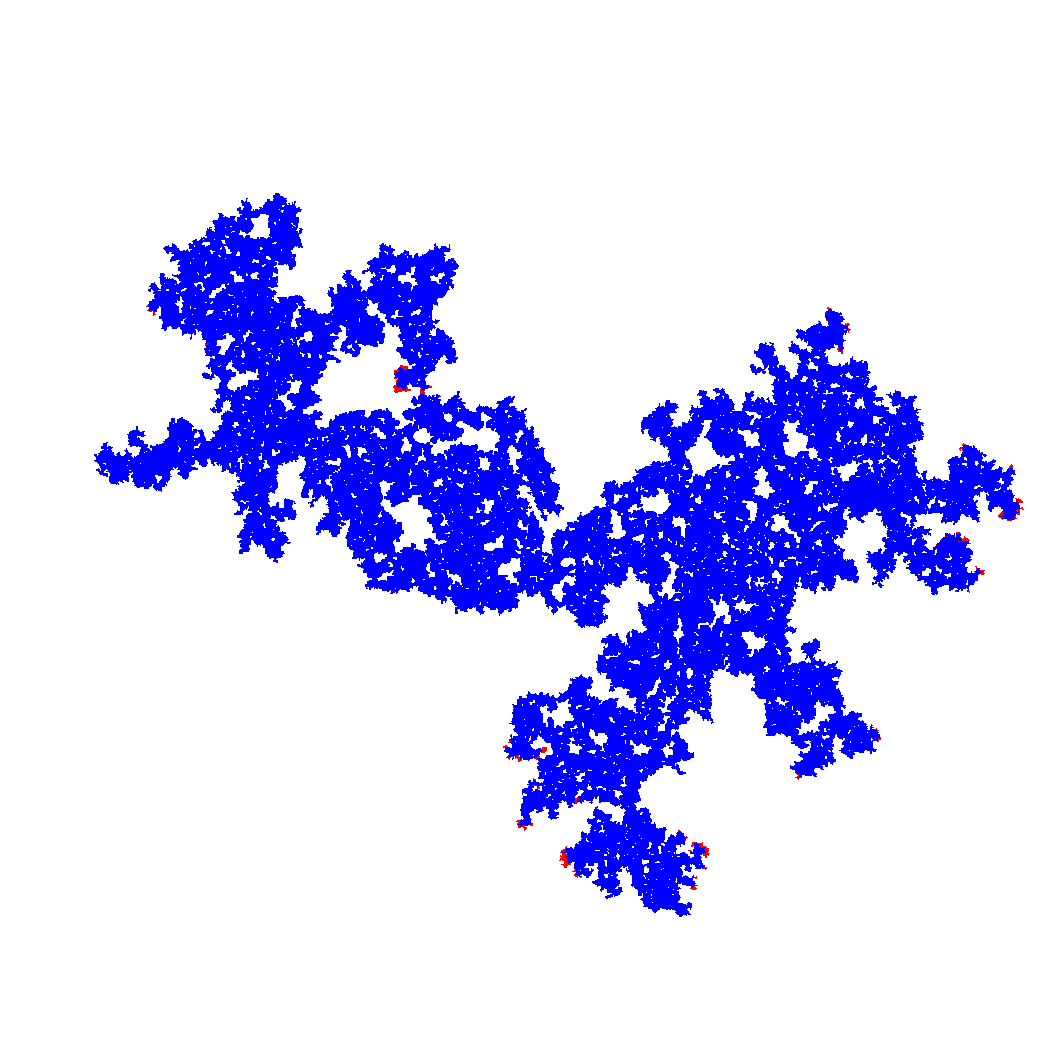}
\end{center}
\end{center}
\caption{Chase-escape on $\mathbb Z^2$ with $\lambda =1$ (left), $\lambda = 0.75$ (middle) and $\lambda =0.5$ (right). Simulations suggest that red can survive despite being slower.} \label{fig:cande1}
\end{figure}

By comparing to a nearest-neighbor random walk, it is straightforward to prove that $\lambda_c(\mathbb Z) =1$ (see Lemma \ref{lem:Z}). The discussion at the end of \cite{complete} credits James Martin with the  conjecture that $\lambda_c(\mathbb Z^2) <1$. Simulations in \cite{si} suggest that red can survive for $\lambda <1$ and that $\lambda_c(\mathbb Z^2) =1/2$ (see Figures \ref{fig:cande1} and \ref{fig:crit}).  
 This is surprising, because, if true, red can still survive even when it spreads significantly slower than blue. One ``advantage" red has is that if blue reaches the red boundary it gets slowed down. Another is that there are regions that red spreads unusually fast across. Since red and blue passage times are independent, blue is unlikely to also spread quickly. Using these advantages to form a rigorous proof seems closely related to understanding the boundary fluctuation of the Richardson growth model, and geodesics in first passage percolation. Both of these objects are notoriously difficult to describe \cite{fpp}. 
 
 Proving coexistence in $\mathbb Z^d$ for some pair $(d,\lambda)$ with $d \geq 2$ and $\lambda \leq 1$ would be very interesting. We further conjecture that $\lambda_c(\mathbb Z^d) \downarrow 0$ as $d \uparrow \infty$. This may be hard to prove since it is unclear that the process is monotonic in $\lambda$ or $d$. Intuitively, $P_\lambda(A)$ ought to increase with $\lambda$ and decrease with $d$, and this is indeed the case on trees. However, there is no obvious coupling that establishes this on graphs with cycles. The issue is that when red spreads faster, then blue also speeds up. It is not even obvious that there is a single phase transition. 
 
 \begin{figure}
\begin{center}
\begin{center}
\includegraphics[width = 6cm]{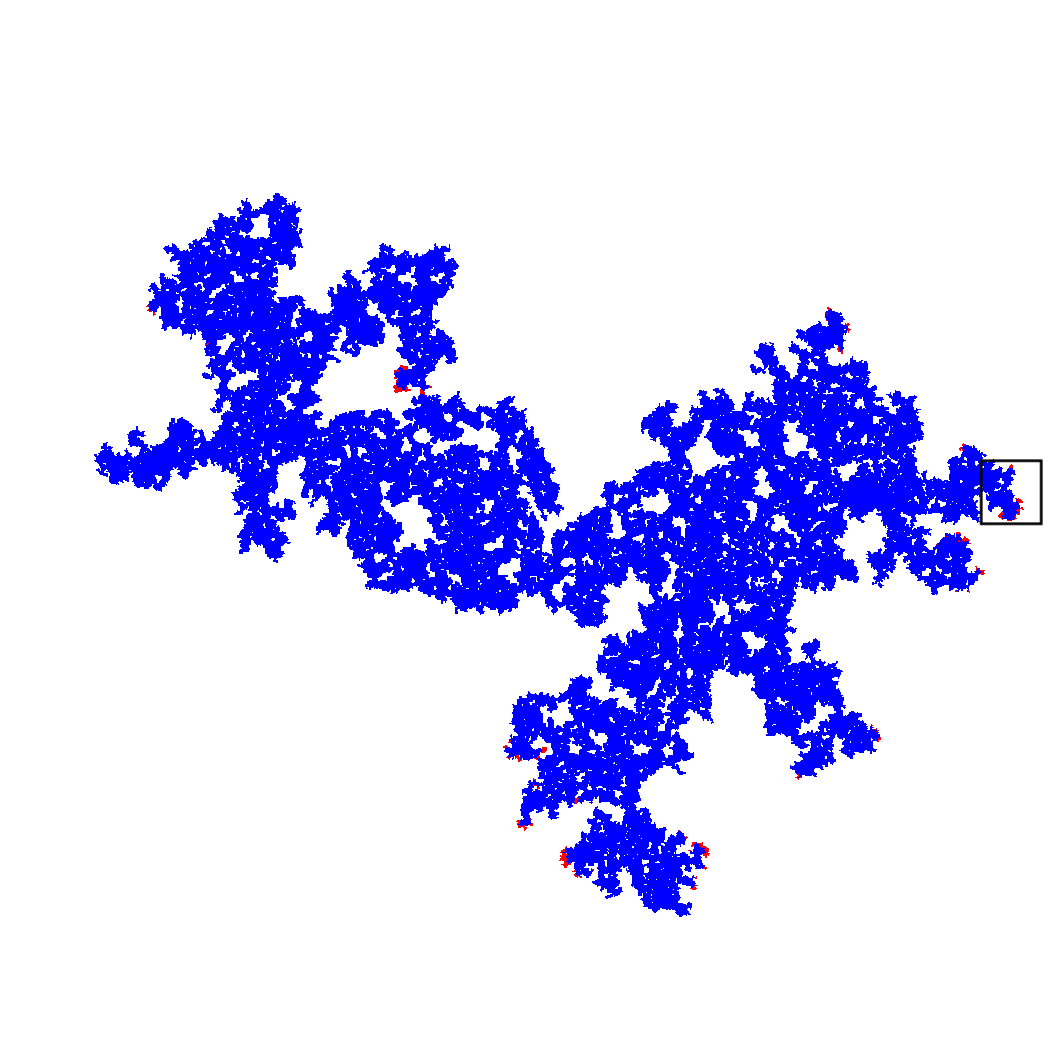} \qquad 
\includegraphics[width = 4cm]{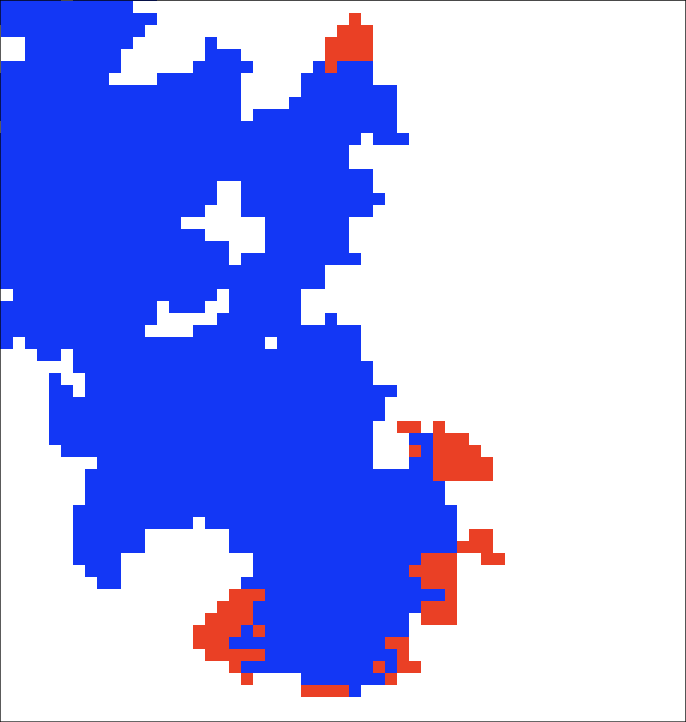}
\end{center}
\end{center}
\caption{Chase-escape on $\mathbb Z^2$ at the conjectured critical value $\lambda = 0.5$ (left). The right image shows that there are still surviving red buds at the boundary of the fractal-like blue region.} \label{fig:crit}
\end{figure}
 
 \subsection{Results}

 Establishing a coexistence phase where red is slower than blue is tractable on the infinite $d$-ary tree $\mathbb T_d$ (a rooted tree where each vertex has $d$-children). Our first result is a new proof that computes $\lambda_c(\mathbb T_{d})$ and determines the behavior at criticality. Note, as discussed earlier, this result was already proved in \cite{tree1, bordenave2014extinction}. Our proof is different though and much shorter.

\begin{thm} \label{thm:tree}
On the $d$-ary tree with $d \geq 2$, $$\lambda_c(\mathbb T_d) =2d -1 - 2 \sqrt { d^2 - d} \sim \f 1 {4d},$$ and $P_{\lambda_c}(A) =0$.
\end{thm}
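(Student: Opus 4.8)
The plan is to reduce chase-escape on $\mathbb T_d$ to a pair of racing first-passage processes and then to a branching-process/generating-function computation. First I would observe that on a tree the trajectory of the blue front is forced: blue can only advance along the unique path that red has already opened, so the competition between red and blue reduces to a one-dimensional race along each ray. Fix a ray $\rho = v_0, v_1, v_2, \ldots$ from the root. Red reaches $v_n$ at time $S_n^R = \sum_{k=1}^n t^R_{v_{k-1} v_k}$, a sum of i.i.d.\ $\mathrm{Exp}(\lambda)$ variables, and blue, once released at the root at time $t^B_{\mathfrak e}$, reaches $v_n$ at time $t^B_{\mathfrak e} + S_n^B$ with $S_n^B$ a sum of i.i.d.\ $\mathrm{Exp}(1)$ variables. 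Red survives on this ray iff $S_n^R < t^B_{\mathfrak e} + S_n^B$ for all $n$. The key point is that the increments $t^R_{v_{k-1}v_k} - t^B_{v_{k-1}v_k}$ along a ray form an i.i.d.\ sequence with a known two-sided-exponential law, so the race is governed by a random walk $W_n = S_n^R - S_n^B$ with $\mathrm{Exp}(\lambda)$ minus $\mathrm{Exp}(1)$ steps.

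Second, I would set up the branching structure. Call a vertex $v$ at depth $n$ \emph{red-occupied forever} if red reaches $v$ strictly before blue does; the set of such vertices, together with whether red ultimately escapes, is governed by the event that $W_n$ stays below the level $t^B_{\mathfrak e}$ along some infinite ray, where $W$ branches at each vertex into $d$ independent copies of its future increments. The escape probability is therefore the survival probability of a multitype (or level-indexed) branching process whose offspring law is read off from the walk $W$. To compute $\lambda_c$ I would use the standard first-moment / spine criterion: for a branching random walk with $d$ children per vertex and i.i.d.\ step $X = \mathrm{Exp}(\lambda) - \mathrm{Exp}(1)$, the process of particles staying below a fixed barrier survives with positive probability iff $d\,\mathbb E e^{\theta X} > 1$ for some $\theta > 0$ — equivalently iff $\inf_{\theta>0} d\,\mathbb E e^{\theta X} > 1$ — and dies out (including at the boundary of this region) otherwise. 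The moment generating function is explicit: $\mathbb E e^{\theta X} = \frac{\lambda}{\lambda-\theta}\cdot\frac{1}{1+\theta}$ for $0 < \theta < \lambda$. Minimizing $d\cdot\frac{\lambda}{(\lambda-\theta)(1+\theta)}$ over $\theta$ is a one-variable calculus problem; setting the minimum equal to $1$ and solving the resulting quadratic in $\lambda$ yields $\lambda_c(\mathbb T_d) = 2d - 1 - 2\sqrt{d^2 - d}$, with the asymptotic $\sim \frac{1}{4d}$ following by rationalizing. For the critical case $\lambda = \lambda_c$, I would invoke the fact that a branching random walk at the boundary value $\inf_\theta d\,\mathbb E e^{\theta X} = 1$ has, almost surely, no infinite ray staying below any fixed finite barrier — this is the standard "no survival at criticality" statement for killed BRW, provable via a second-moment obstruction or by Biggins's theorem on the speed of the minimum — which gives $P_{\lambda_c}(A) = 0$.

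The main obstacle I anticipate is making the reduction to killed branching random walk fully rigorous, in two respects. First, one must handle the randomness of the barrier: $t^B_{\mathfrak e}$ is a fixed positive random variable independent of everything else, so conditioning on it turns a random barrier into a deterministic one and survival for positive-probability-many values of $t^B_{\mathfrak e}$ is equivalent to survival for the deterministic barrier problem — this is routine but must be stated carefully, and it is exactly what separates $\lambda < \lambda_c$ (escape with positive probability, since any barrier works once the subcritical/supercritical dichotomy favors survival) from $\lambda > \lambda_c$ (no escape for any barrier). Second, and more delicate, is the criticality claim: one needs the sharp statement that killed BRW does not survive at the critical decay rate, and one should check that the relevant large-deviation rate function for $X$ is strictly convex and that the optimizing $\theta$ lies in the open interval $(0,\lambda)$ so that the generating-function computation is valid; degenerate behavior as $\theta \to \lambda^-$ or $\theta \to 0^+$ must be excluded, which it is precisely because the minimizer is interior. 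Everything else — the quadratic solve, the asymptotics, and the monotone-in-$\lambda$ behavior on the tree alluded to in the introduction — is elementary once this framework is in place.
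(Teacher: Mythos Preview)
Your approach is correct in outline and reaches the same critical equation, but it is genuinely different from the paper's argument. The paper works with the embedded jump chain: along a fixed ray the gap between the rightmost red and blue sites is a nearest-neighbour $\pm 1$ walk with up-probability $p=\lambda/(\lambda+1)$, and a reflection/ballot computation gives the sharp estimate $P_\lambda(A_n)\asymp [4p(1-p)]^n n^{-3/2}$ via Catalan numbers. The first-moment sum $\sum_n d^n P_\lambda(A_n)$ then converges exactly when $4dp(1-p)\le 1$, and the explicit $n^{-3/2}$ factor is what yields $P_{\lambda_c}(A)=0$ at the boundary without appealing to any outside theorem; the supercritical direction is handled by embedding a Galton--Watson process at a fixed large depth $N$. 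Your route through the moment generating function of $X=\mathrm{Exp}(\lambda)-\mathrm{Exp}(1)$ lands on the identical equation, since $\inf_\theta \mathbb E e^{\theta X}=4\lambda/(\lambda+1)^2=4p(1-p)$, but it outsources the critical case to a Biggins-type no-survival result for killed branching random walk rather than extracting it from a polynomial correction. In effect you have reproduced the flavour of the earlier proofs of Kordzakhia and Bordenave that the paper is explicitly replacing with something shorter and self-contained; what the paper buys is that criticality falls out of one elementary estimate, whereas what your approach buys is a conceptual picture that generalises immediately to other step distributions.

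Two small repairs. First, your optimiser is $\theta^\ast=(\lambda-1)/2$, which is negative for the relevant range $\lambda<1$; the infimum of $d\,\mathbb E e^{\theta X}$ must be taken over the full strip $-1<\theta<\lambda$, not over $\theta>0$ as you wrote (restricted to $\theta>0$ the infimum is $d$ at $\theta\to 0^+$, which is vacuous). Second, the claim that blue reaches $v_n$ at time $t_{\mathfrak e}^B+S_n^B$ is only true on the event that blue never has to wait for red; in general $T_n^B=\max(T_{n-1}^B,S_n^R)+t_n^B$. Because the survival event is exactly the event on which blue never waits, your barrier reduction still goes through, but the statement needs this caveat.
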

The proof that $P_\lambda(A) = 0$ for $\lambda \leq \lambda_c$ uses the fact that, restricted to a ray on $\mathbb T_d$, the distance between red and blue is a nearest neighbor random walk on the nonnegative integers. We can use the setup from Lemma \ref{lem:Z} and the asymptotic behavior for the return time to 0 of a random walk to show that the expected number of sites at distance $n$ to be colored red is summable. This implies coexistence does not occur. To prove $P_\lambda(A) >0$ for $\lambda > \lambda_c$ we embed a Galton-Watson process that describes the number of red particles that reach generation $kN$, $k \in \mathbb N$. When $N$ is large enough, the Galton-Watson process is supercritical, and thus $P_\lambda(A) >0$. 

Our main interest is studying chase-escape on graphs with cycles. The argument that proves $P_\lambda(A) = 0$ for $\lambda \leq \lambda_c(\mathbb T_d)$ can be easily extended to any graph on which the number of length-$n$ paths grows no faster than exponential. 

\begin{cor} \label{cor:paths}
	Fix a graph $G$ and let $\Gamma_n$ be the number of self-avoiding length-$n$ paths starting from the root. If $|\Gamma_n| \leq d^n$ for all $n \geq 1$, then $\lambda_c(\mathbb T_d) \leq \lambda_c(G)$.
\end{cor}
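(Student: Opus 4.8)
The statement is equivalent to $P_\lambda(A(G)) = 0$ for every $\lambda \le \lambda_c(\mathbb T_d)$, and I would prove it by a first moment estimate. Let $\tilde R_n$ denote the number of vertices of $G$ at graph distance $n$ from $\rho$ that are ever colored red. A vertex turns red only by being infected from an already red neighbor, so the set of ever red vertices is connected and contains $\rho$; if red survives for all time this set cannot stay confined to a finite region (otherwise blue would eventually overtake all of it), so for locally finite $G$ it is unbounded and hence meets every sphere around $\rho$. Thus $A(G) \subseteq \{\tilde R_n \ge 1\}$ for all $n$, and by Markov's inequality it suffices to show $E_\lambda[\tilde R_n] \to 0$.

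To bound $E_\lambda[\tilde R_n] = \sum_{v:\, \operatorname{dist}(\rho,v) = n} P_\lambda(v\text{ ever red})$ I would union bound over infection paths. If $v$ is ever red there is a self-avoiding path $\gamma = (\rho = v_0, v_1, \dots, v_\ell)$ from $\rho$ to $v$ along which each $v_i$ was turned red by $v_{i-1}$, so $P_\lambda(v\text{ ever red}) \le \sum_\gamma P_\lambda(\gamma\text{ is the infection path of }v)$, the sum over self-avoiding paths from $\rho$ to $v$. The key point is the inclusion $\{\gamma\text{ is the infection path of }v\} \subseteq E_\gamma$, where $E_\gamma$ is the event that red reaches the far end of $\gamma$ in the chase-escape process run on the path graph $\gamma$ alone, with the auxiliary blue vertex attached at $\rho$. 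Since a self-avoiding path uses each edge at most once, the passage times $t_{e_i}^R, t_{e_i}^B$ along $\gamma$ together with $t_{\mathfrak e}^B$ are i.i.d.\ exactly as along a ray of $\mathbb T_d$, so $P_\lambda(E_\gamma) = q_\ell$, where $q_\ell$ is the probability that red reaches level $\ell$ on a ray of $\mathbb T_d$. Using $|\Gamma_\ell| \le d^\ell$ and that a length-$\ell$ path from $\rho$ can reach distance $n$ only when $\ell \ge n$, this gives
\[
 E_\lambda[\tilde R_n] \;\le\; \sum_{\ell \ge n} |\Gamma_\ell|\, q_\ell \;\le\; \sum_{\ell \ge n} d^\ell q_\ell .
\]
The proof of Theorem \ref{thm:tree} establishes that $\sum_\ell d^\ell q_\ell < \infty$ for all $\lambda \le \lambda_c(\mathbb T_d)$; indeed $d^\ell q_\ell$ is exactly the expected number of red vertices at level $\ell$ of $\mathbb T_d$, which that proof shows to be summable. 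Hence the right hand side above is the tail of a convergent series and tends to $0$, which is what we need.

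The step I expect to require the most care, and the only place where the presence of cycles in $G$ matters, is the inclusion $\{\gamma\text{ is the infection path of }v\} \subseteq E_\gamma$: in $G$ blue may reach the vertices of $\gamma$ through shortcuts that the path graph does not contain, and these can only speed up blue's pursuit. I would prove the inclusion by an arrival time comparison. On the infection-path event $v_j$ turns red at $\tau_j := \sum_{k\le j} t_{e_k}^R$, and this value depends only on the red passage times along $\gamma$, not on the rest of $G$. Write $\beta_j$ for the time $v_j$ turns blue in $G$ and $\hat\beta_j$ for its counterpart in the path-graph process, so $\hat\beta_0 = t_{\mathfrak e}^B$ and $\hat\beta_j = \max(\hat\beta_{j-1}, \tau_j) + t_{e_j}^B$. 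A short induction on $j$ gives $\beta_j \le \hat\beta_j$: once blue has reached $v_{j-1}$ and $v_j$ has turned red, the edge $e_j$ is open for blue to cross, so blue reaches $v_j$ by $\max(\beta_{j-1}, \tau_j) + t_{e_j}^B \le \max(\hat\beta_{j-1}, \tau_j) + t_{e_j}^B = \hat\beta_j$ (and if $v_j$ had turned blue even earlier via another route this only makes $\beta_j$ smaller). On the other hand, on the infection-path event $v_j$ must still be red at time $\tau_{j+1}$ in order to infect $v_{j+1}$, so $\beta_j \ge \tau_{j+1}$. Combining, $\hat\beta_j \ge \tau_{j+1}$ for all $j < \ell$, and the event $\bigcap_{j<\ell}\{\hat\beta_j \ge \tau_{j+1}\}$ is precisely $E_\gamma$, the event that red is never overtaken on the path graph. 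The remaining details, namely the connectivity remark placing $A(G)$ inside $\{\tilde R_n \ge 1\}$ and the identification of $q_\ell$ with the ray quantity appearing in Theorem \ref{thm:tree}, are routine.
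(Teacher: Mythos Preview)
Your argument is correct and follows essentially the same route as the paper: a first-moment bound on the number of red vertices, obtained by summing over self-avoiding paths and comparing each path to the one-dimensional process of Lemma~\ref{lem:Zn}. If anything you are more careful than the paper in two respects: you sum over self-avoiding paths of all lengths $\ell\ge n$ (the paper's displayed equality $\sum_{v:|v|=n}\sum_{\gamma:\rho\to v}=\sum_{\gamma\in\Gamma_n}$ tacitly restricts to geodesics), and you supply the arrival-time coupling that justifies the inclusion $\{\gamma\text{ is the infection path}\}\subseteq E_\gamma$, which the paper asserts without proof.
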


We initially thought it possible that $\lambda_{c}(\mathbb Z \times \{0,1\})$ was strictly less than 1. This was because blue is slowed down when it reaches the not-entirely-filled red boundary. However, this effect is negligible, because the mixed region of red and uncolored sites is exponentially unlikely to be large. 

\begin{thm} \label{thm:ladder} For $G=\mathbb Z \times \{0,1\}$ with the subgraph structure induced by $\mathbb Z^2$ and $\rho=\0$,
$\lambda_c(G) =1$
\end{thm}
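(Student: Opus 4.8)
\section*{Proof proposal for Theorem~\ref{thm:ladder}}

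The plan is to prove $\lambda_c(G)\le 1$ and $\lambda_c(G)\ge 1$ separately. The basic input is that first-passage percolation on $G=\mathbb Z\times\{0,1\}$ with rate-$1$ exponential weights has a time constant: by Kingman's subadditive ergodic theorem $T(\mathbf 0,(n,i))/n\to\mu_1$ almost surely and in $L^1$, with $\mu_1\in(0,1]$ a constant not depending on $i\in\{0,1\}$. By exponential scaling the rate-$\lambda$ red frontier, run on the full ladder, advances at asymptotic speed $v_R:=\lambda/\mu_1$, while blue run on the full ladder at rate $1$ advances at speed $v_B:=1/\mu_1$; note $v_R<v_B$ precisely when $\lambda<1$. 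Since $G$ is two-ended and reflection-symmetric about $\rho=\mathbf 0$, red escapes to infinity iff the rightmost red column $R(t)$ or the leftmost red column is unbounded, so $P_\lambda(A)\le 2P_\lambda(A_\to)$ where $A_\to=\{\sup_t R(t)=\infty\}$; it therefore suffices to prove $P_\lambda(A_\to)=0$ for $\lambda<1$ and $P_\lambda(A)>0$ for every $\lambda>1$.

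For $\lambda>1$ I would run the head-start argument behind \eqref{eqn:zdcriticallambda}. Couple the colors to the unobstructed processes: as long as no red site has been killed, red's cluster coincides with the rate-$\lambda$ FPP cluster grown from $\rho$, and blue's cluster is always contained in the rate-$1$ FPP cluster grown from $\rho$ and started at the independent exponential time $t^B_{\mathfrak e}$. Fix $\epsilon\in(0,(v_R-v_B)/2)$. Almost surely there is a finite random time past which the rate-$\lambda$ front from $\rho$ exceeds $(v_R-\epsilon)t$ and the shifted rate-$1$ front lies below $(v_B+\epsilon)(t-t^B_{\mathfrak e})^+$. Because $t^B_{\mathfrak e}$ is independent of these transient times (and of a further finite random constant controlling the pre-limit regime), it exceeds all of them with positive probability; on that event $(v_B+\epsilon)(t-t^B_{\mathfrak e})<(v_R-\epsilon)t$ for all $t$, so the blue front stays strictly behind the red front for all time, the red frontier is never killed, and $P_\lambda(A)>0$. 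Hence $\lambda_c(G)\le 1$.

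For $\lambda<1$, suppose $P(A_\to)>0$. First, $R(t)\le v_R t(1+o(1))$ a.s., since red's cluster is contained in the rate-$\lambda$ FPP cluster. Second --- this is the ``mixed region is small'' point --- let $W(t)$ count the columns in $(B(t),R(t)]$ that still contain an empty site, where $B(t)$ is the rightmost blue column. A column enters this set only when the red frontier advances into it (instantaneous rate at most $2\lambda$), and while it is in the set its two sites are one red and one empty, which are adjacent, so it leaves the set at rate at least $\lambda$; hence $W(t)$ is stochastically dominated, uniformly in $t$, by the occupancy of an $M/M/\infty$-type queue and has exponential tails uniform in $t$. Third, on $A_\to$ the blue front also tends to $+\infty$ (while red is strictly ahead, blue has an adjacent red site and advances at a positive rate), and the gap $G(t)=R(t)-B(t)$ returns along a sequence of well-separated times to a bounded set: when $G(t)$ is large there are, by the uniform tightness of $W$, many solid-red columns immediately ahead of blue, so by the ladder time constant blue advances there at speed close to $v_B$ while the red front advances at speed close to $v_R<v_B$, driving $G$ back down to order $W(\cdot)+O(1)$ within time $O(G(t))$, hence below a fixed $m_0$ at infinitely many times. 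Finally, from any configuration with $G(t)\le m$, a local estimate on finitely many clocks (blue's clocks ringing quickly across the at most $2m$ intervening edges and then the one or two frontier red sites, while the red clocks at the frontier stay quiet) shows blue erases all red to the right of $\rho$ within bounded time with probability at least $p(m)>0$. Applying the conditional Borel--Cantelli lemma at the recurrence times from the third step forces red to die to the right almost surely, contradicting $P(A_\to)>0$; thus $P_\lambda(A)=0$ and $\lambda_c(G)\ge 1$, so $\lambda_c(G)=1$.

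The delicate step is the third one in the lower bound: converting ``blue is essentially unobstructed FPP through a mostly-solid red cluster'' into a genuine recurrence statement for the gap --- controlling the error terms in ``blue catches up within time $O(G(t))$'' uniformly, and handling the atypical times when the mixed region is long --- and then assembling enough independence for the final Borel--Cantelli step. The obstruction specific to the ladder (as opposed to $\mathbb Z$, where the gap is literally a nearest-neighbour walk, or a tree) is that the red/blue interface is genuinely two-dimensional and one cannot condition on $A_\to$ while keeping the passage times independent; the argument must be arranged so that the speed bounds and the mixed-region bound are unconditional almost-sure or stochastic-domination facts, and the recurrence of short gaps is read off on $A_\to$ without reference to future randomness, with a regeneration structure built at the times the mixed region is short and the gap is small. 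Upgrading the bare a.s.\ convergence in the ladder time constant to a uniform concentration estimate is what makes this tractable.
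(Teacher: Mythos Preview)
Your proposal is correct and follows essentially the same route as the paper: the upper bound via the head-start/shape argument, and the lower bound via (i) showing the mixed region of partially-filled columns has uniformly exponential tails by domination by a birth--death chain, (ii) using concentration for the ladder time constant to argue that blue, traversing a mostly solid red block at speed close to $v_B>v_R$, drives the gap back to a bounded value with high probability, and (iii) a renewal/Borel--Cantelli argument at the resulting recurrence times. The paper's implementation of step (ii) is slightly more structured than yours---it discretises time into ``red cycles'' (successive returns of the hole count to zero), couples to two independent one-type processes on these cycles, and iterates an explicit geometric contraction $M\mapsto M\alpha$ of the gap---but this is exactly the regeneration structure and uniform concentration you flag as the delicate point, not a different idea.
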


To prove $\lambda_{c}(\mathbb Z \times \{0,1\}) \le 1$, we show that for any $\lambda < 1$, there is $K < \infty$ such that blue is within a distance $K$ of the red boundary infinity often. Whenever this occurs, there is a positive probability that blue overtakes all of the surviving red sites.  Thus, red is caught eventually. While we did not work out the details, this result suggests that red is caught on any strip of a fixed height. Additionally it ought to hold that $P_1(A)=0$ on the ladder, however, our proof relies heavily on the fact that blue moves strictly faster than red and some new ideas are required.


%


Next, we show that it is possible on the lattice for red to spread slower than the blue, but still have a chance to survive.  
We consider the chase-escape model on the oriented lattice $\OZ^d$, in which particles can only occupy their neighboring sites along the oriented edges $\{\vec e_{x, x+b_{i}}\}_{x\in \mathbb Z^{d}, i=1,\ldots, d}$, with $\{b_1,\hdots, b_d\}$ being the standard, positive basis vectors for $\mathbb Z^{d}$. Initially, $\mathbf 0 = (0,\ldots, 0)$ is red and $(-1,0, \ldots, 0)$ is blue. Red and blue passage times for each directed edge $\vec e$ are sampled independently from the two distributions 
\begin{align} 
t_{\vec e}^{R}&\sim p\delta_1 +  (1-p) \delta_\infty \nonumber \\
t_{\vec e}^{B}&\sim p\delta_0 + (1-p) \delta_m. \label{eq:percB}
\end{align}
With atomic passage times there could be moments that blue arrives at a red site at the exact time that red would spread to the next site. To make the process well-defined, we invoke the rule that blue catches red and prevents it from spreading if blue arrives at or before the time red would move.  Similar competitive growth models that mimic Bernoulli percolation are studied by Garet and Marchand in \cite{bernoulli1, bernoulli2}. The competition dynamics they consider are not chase-escape, but rather more like the dynamics in \cite{pemantle}.

Since edges with $\infty$ passage times are never used, the collection of all edges for which red passage time equals one is equivalent to the open clusters in \emph{bond percolation}, the random subgraph obtained when each edge in $\mathbb Z^{d}$ is independently retained (open) with probability $p$ and removed (closed) with probability $1-p$. Many aspects of bond percolation are well understood. Letting $\mathcal C$ be the connected component containing the origin, it is known that there is a critical value $\vec p_c(d)$ and if $p>\vec p_{c}(d)$ we have $P_p(|\mathcal C| = \infty) >0$ \cite{durrett_oriented}. 
 

Red particles never use edges with $\infty$ passage times to expand. However, blue can still ``jump" across such  an edge if the other endpoint was colored red along some other path with no infinite red passage time. Uninhibited, blue would spread much faster than red. However, the presence of sufficiently many ``dead ends"---places where red has $\infty$ passage times in all directions---cuts blue off from spreading and makes it possible for red to escape for small $p$ and large $d$ and $m$. Let $A= A(d,p,m)$ be the event that there are always surviving red particles in the percolation-like chase-escape model on $\OZ^d$. We show that if $d$ and $p$ are chosen appropriately, and $m$ is made large then red can survive. 
\begin{thm} \label{thm:site_perc}  For  $G = \OZ^{d}$ and $\rho = \mathbf 0$, there is a choice of $d$  and $p > \vec p_c(d)$ such that 
$$\lim_{m \to \infty} P_{p,m}(A) = (1-p) P_p(|\mathcal C|= \infty).$$
In particular, $P_{p,m}(A) >0$ for some choice of $d,p,$ and $m$. 
\end{thm}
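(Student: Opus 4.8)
The plan is to first show $A\subseteq\{t^B_{\mathfrak e}=m\}\cap\{|\mathcal C|=\infty\}$ and that these two events are independent. For the first inclusion: if $t^B_{\mathfrak e}=0$, blue colours $\0$ at time $0$ and, by the tie-breaking rule, prevents red from ever leaving $\0$, so red dies at once. For the second: blue only ever occupies sites that were red, and red only ever occupies sites of $\mathcal C$; so if $|\mathcal C|<\infty$ then red colours finitely many sites and after a finite time creates no new red, after which blue -- which enters through $\0$ and can cross every oriented edge between two sites of $\mathcal C$ in finite time (waiting, if need be, for the far endpoint to turn red) -- colours all of them, and red dies. The event $\{t^B_{\mathfrak e}=m\}$ depends only on the blue clock of $\mathfrak e$, and $\{|\mathcal C|=\infty\}$ only on the red-open edges of $\OZ^d$, so they are independent and $P_{p,m}(A)\le(1-p)P_p(|\mathcal C|=\infty)$ for all $m,p,d$. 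Everything thus reduces to the matching lower bound on $\liminf_{m\to\infty}$.

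\noindent\textbf{A deterministic escape criterion.} Since $\{t^B_{\mathfrak e}=m\}$ is independent of everything red and has probability $1-p$, it suffices to show that, conditionally on $\{t^B_{\mathfrak e}=m\}\cap\{|\mathcal C|=\infty\}$, red survives with probability $\to 1$ as $m\to\infty$. The mechanism is that each slow crossing costs blue $m$ units of time, so blue needs order $n/m$ slow crossings to advance $n$ levels while red advances at rate one; for $m$ large, red wins the race along a good ray. To make this precise, for $w\in\mathcal C$ let $D(w)$ be the least number of slow edges on an oriented path $\0\to w$ all of whose vertices lie in $\mathcal C$. Because blue enters only through $\0$ at time $m$, stays inside $\mathcal C$, and spends at least $t^B_{\vec e}$ on each edge $\vec e$, blue cannot colour $w$ before time $m(1+D(w))$. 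A short induction on $n$ then yields: \emph{if $\gamma=(\0=v_0,v_1,\dots)$ is an infinite oriented path with all edges red-open and $m(1+D(v_n))\ge n+1$ for every $n$, then red colours $v_n$ by time $n$ for every $n$, hence survives} -- one uses that $v_n$ sits at level $n$, so red reaches it by time $n$ along $\gamma$ unless blue has already reached $v_{n-1}$ or $v_n$, which the hypothesis forbids.

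\noindent\textbf{The percolation estimate.} It remains to produce such a $\gamma$ with conditional probability $\to 1$. On $\{|\mathcal C|=\infty\}$ some infinite red-open path from $\0$ exists (K\"onig's lemma on the tree of red-open paths from $\0$), and since $v_n$ has level $n$ it is enough to bound the bad set $B_c:=\{w\in\mathcal C\colon D(w)<c\cdot(\text{level of }w)\}$ for a small fixed $c\in(0,1)$: if $B_c$ meets only the first $m-1$ levels and $m\ge 1/c$, then \emph{every} infinite red-open path $\gamma$ has $v_n\notin B_c$ for $n\ge m$, so $D(v_n)\ge cn$ and $m(1+D(v_n))=m+mD(v_n)\ge m+n\ge n+1$, while $m(1+D(v_n))\ge m\ge n+1$ trivially for $n<m$. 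So the theorem reduces to: \emph{there are $d$, $p\in(\vec p_c(d),1)$ and $c>0$ with $B_c$ finite a.s.}, since then $P(B_c\subseteq\text{levels}\le m-1)\to 1$ as $m\to\infty$, which with the previous paragraph gives the lower bound. I would prove this by a first moment bound: $\{w\in B_c\}$ with $w$ at level $\ell$ forces an oriented path $\pi\colon\0\to w$ (automatically of length $\ell$) with all vertices in $\mathcal C$ and more than $(1-c)\ell$ fast edges; as the fast/slow labels are independent of $\mathcal C$, the probability splits into $P(V(\pi)\subseteq\mathcal C)$ times a binomial large-deviation factor $e^{-\Theta(\ell)}$, and one sums over the $\le d^\ell$ length-$\ell$ paths. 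The naive estimate $P(V(\pi)\subseteq\mathcal C)\le 1$ only closes for $p<1/d$, which is incompatible with $p>\vec p_c(d)\ge 1/d$; the honest route is a renormalization/block argument showing that for $d$ large and $p$ in a window of the shape $(\vec p_c(d),\,c_0 d^{-1/2})$ -- nonempty because $\vec p_c(d)\to 0$, indeed $\vec p_c(d)=\Theta(1/d)$ -- the fast edges lying inside $\mathcal C$ are subcritical (heuristically, a fast edge out of $\mathcal C$ lands in $\mathcal C$ only with probability about $p$, so the effective branching is about $dp^2<1$). From subcriticality, $D(w)\gtrsim(\text{level of }w)$ off a finite set follows by standard large deviations and Borel--Cantelli.

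\noindent\textbf{Main obstacle.} The technical heart is exactly this last estimate -- that $D(w)$ grows linearly in the level of $w$ for all but finitely many $w\in\mathcal C$. The obstruction is the coupling between $\mathcal C$ and the fast edges it contains: a priori fast edges could align along thin tendrils of $\mathcal C$ and hand blue a shortcut, so one cannot bound $P(V(\pi)\subseteq\mathcal C)$ edge by edge along $\pi$; controlling the geometry of $\mathcal C$ and of the fast subgraph inside it simultaneously -- which is where the ``dead ends'' of $\mathcal C$ do the work -- is the part that requires real care. The upper bound, the escape criterion, and the reduction are elementary by comparison.
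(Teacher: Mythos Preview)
Your skeleton is exactly the paper's: condition on $B=\{t^B_{\mathfrak e}=m\}$ and $C=\{|\mathcal C|=\infty\}$, observe $A\subseteq B\cap C$ and that $B,C$ are independent with $P(B)=1-p$, then show $P(A\mid B,C)\to 1$ by a first-moment bound over oriented paths $\pi$ of length $\ell$, splitting the summand into a blue large-deviation factor $\approx(p+\epsilon)^\ell$ and a red factor $P(V(\pi)\subseteq\mathcal C)$. Your reduction via $D(w)$ and $B_c$ is an equivalent repackaging of the paper's union bound $\sum_{n\ge m}\sum_{\gamma_n}P(T(\gamma_n)\le n+1\mid B)\,P(R(\gamma_n)\mid C)$.

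The gap is precisely where you place it, but it is not a real obstacle: the bound $P(V(\pi)\subseteq\mathcal C)$ has a one-line resolution that makes the renormalization you propose unnecessary. If $\pi=(v_0,v_1,\dots,v_\ell)$ and every $v_k\in\mathcal C$, then in particular each $v_k$ (for $k\ge 1$) has at least one \emph{incoming} red-open edge, since some open oriented path from $\0$ ends there. In $\OZ^d$ the $d$ incoming edges of $v_k$ are $\{(v_k-b_i,v_k):1\le i\le d\}$, and because the $v_k$ lie on distinct levels these $\ell$ edge-sets are pairwise disjoint, hence the $\ell$ events are independent. Therefore
\[
P\bigl(V(\pi)\subseteq\mathcal C\bigr)\ \le\ \bigl(1-(1-p)^d\bigr)^{\ell},
\]
and dividing by $P(C)$ only costs a constant. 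Now take $p=d^{-1}+d^{-2}$; the asymptotics $\vec p_c(d)=d^{-1}+d^{-3}+O(d^{-4})$ give $p>\vec p_c(d)$ for large $d$, and a direct computation yields $1-(1-p)^d<1-3d^{-1}$, so
\[
d\,(p+\epsilon)\,\bigl(1-(1-p)^d\bigr)\ <\ d\,(d^{-1}+d^{-2}+\epsilon)(1-3d^{-1})\ =\ 1-2d^{-1}+O(d^{-2})+O(\epsilon d)\ <\ 1
\]
for $\epsilon$ small. This makes your first-moment sum $\sum_{\ell\ge m}d^\ell(p+\epsilon)^\ell(1-(1-p)^d)^\ell$ geometric with ratio $<1$, and it vanishes as $m\to\infty$. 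No block argument, no study of the fast subgraph inside $\mathcal C$, and no window $p<c_0 d^{-1/2}$ is needed; the ``dead ends'' enter exactly through this incoming-edge necessary condition, which you alluded to but did not exploit.
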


It is slightly unsatisfying to require that $d$ be large. The reasoning in the proof of Corollary \ref{cor:paths} can also be used to show that for $p>\vec p_c(2)$, and small values of $m$, that $P_{p,m}(A(\OZ^2))=0$. Thus, if  
\begin{align}\text{$P_{p_0,m_0}(A(\OZ ^2))>0$ for some $p_0$ and $m_0$}\label{eq:Q},
\end{align}	
then there is a phase transition in $P_{p_0,m}(A(\OZ ^2))$ as we increase $m$.  As before there is no obvious monotonicity in $m$, so we cannot rule out the possibility of multiple phase transitions.

\section{The path, tree, and ladder}

\subsection{The path}

\begin{lemma} \label{lem:Z}
$\lambda_c(\mathbb Z) =1$ and $P_1(A) = 0$.
\end{lemma}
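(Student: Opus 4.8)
The plan is to reduce the behaviour of red in each of the two directions along $\mathbb Z$ to a one-dimensional birth-and-death chain and then invoke the standard recurrence/transience dichotomy. The starting observation is that on a path the colored region is always an interval, and since blue enters only through the root $\rho = 0$, once blue occupies $0$ this interval consists of a central blue block containing $0$ with at most one red block on each side and empty sites beyond. Let $\tau := t^B_{\mathfrak e}$ be the (a.s.\ finite) time at which blue first occupies $0$, let $R(t)$ be the rightmost red site and $B(t)$ the rightmost blue site, and set $D_+(t) := R(t) - B(t)$ for $t \ge \tau$. The key claim is that $D_+$ is a continuous-time nearest-neighbor random walk on $\{1,2,\dots\}$ absorbed at $0$: by memorylessness of the exponential passage times the rightmost red particle advances one step into empty space after an independent $\mathrm{Exp}(\lambda)$ waiting time (so $D_+\mapsto D_++1$ at rate $\lambda$), while the rightmost blue particle advances one step into red after an independent $\mathrm{Exp}(1)$ time (so $D_+\mapsto D_+-1$ at rate $1$), and the down-step from state $1$ consumes the last red site to the right of $0$, which is absorption. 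The left-hand gap $D_-$ behaves identically. Since the colored interval becomes entirely blue exactly when both $D_+$ and $D_-$ have been absorbed, red survives on $\mathbb Z$ if and only if at least one of $D_+,D_-$ avoids $0$ forever.

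With this reduction in hand both halves follow from random walk facts. For $\lambda\le 1$ the constant-rate birth--death chain with up-rate $\lambda$ and down-rate $1$ is recurrent --- this includes the null-recurrent boundary case $\lambda=1$ --- hence absorbed at $0$ almost surely from every finite state, so $D_+$ and $D_-$ are both absorbed a.s.\ and red dies out a.s. This gives $P_\lambda(A)=0$ for all $\lambda\le 1$, in particular $P_1(A)=0$, so $\lambda_c(\mathbb Z)\ge 1$. For the reverse inequality, fix $\lambda>1$: with probability $\lambda/(\lambda+1)>0$ the red passage time of the edge $(0,1)$ is smaller than $\tau$, so $R(\tau)\ge 1=B(\tau)+1$ and hence $D_+(\tau)\ge 1$; conditionally on this, $D_+$ starts at a state $\ge 1$ and, being transient for $\lambda>1$, avoids $0$ forever with probability at least $1-1/\lambda>0$, and on that event red survives. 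Thus $P_\lambda(A)>0$ and $\lambda_c(\mathbb Z)\le 1$, so $\lambda_c(\mathbb Z)=1$.

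The one point that needs genuine care --- and the main, if modest, obstacle --- is justifying that $D_+$ really is Markov with the stated jump rates. This requires checking that the clock for the next red advance may be treated as a fresh $\mathrm{Exp}(\lambda)$ variable by memorylessness even though the corresponding edge may have been inspected earlier, that the clocks driving the red front and the blue front are independent, and --- the structural input particular to a path --- that nothing else can move $R$ or $B$ (blue reaches a site only from the root side and red expands only into empty space, so there is no overtaking or merging of fronts). The degenerate transitions at gap $1$, and the short initial window $[0,\tau)$ during which red cannot be killed at all, also need to be spelled out, but these are routine once the above picture is in place.
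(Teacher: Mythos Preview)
Your proposal is correct and follows essentially the same approach as the paper: both reduce the question to the gap between the rightmost red and the rightmost blue site, observe that this gap evolves as a nearest-neighbor random walk (you phrase it as a continuous-time birth--death chain with rates $\lambda$ and $1$, the paper as the embedded discrete chain with step probability $p=\lambda/(\lambda+1)$), and then invoke the recurrence/transience dichotomy at $\lambda=1$. The only cosmetic differences are that you treat the two sides explicitly and handle the initial waiting time $\tau=t^B_{\mathfrak e}$ before blue reaches $0$, whereas the paper immediately passes to the one-sided process on $\{-1,0,1,\dots\}$ with blue at $-1$ and red at $0$, so that the gap starts deterministically at $1$.
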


\begin{proof}
Since the process evolves independently in the positive and negative directions, it suffices to prove that red survives on $\{-1,0,1,2,\hdots\}$ with a blue particle at $-1$ and a red particle at $0$ initially. Let $R_{t}$ be the number of sites ever occupied by the red particles up to time $t$ and let $B_{t}$ be the corresponding quantity for blue particles. 
Define $\tau_{n}=\inf\{t\colon R_{t}+B_{t}=n+2\}$, and $\tau_{n}=\infty$ if there is no such $t$. Let $D_t$ be the distance between the rightmost red and blue sites at time $t$ and $S_ n  = D_{\tau_n}$. Notice that $(S_n)$ is a nearest-neighbor random walk, starting at $S_0=1$, where $0$ is an absorbing state. Due to the independence of red and blue passage times, we have
\begin{align}
p:=P_\lambda(S_{n+1} = S_n +1) = P( \text{Exp}(\lambda) \leq \text{Exp}(1)) = \f{\lambda}{\lambda +1}.\label{eq:p}
\end{align}
When $\lambda \leq 1$, the extinction of red is equivalent to the above $p$-biased random walk visiting zero. This is well known to be a.s.\ finite. 
\end{proof}

\subsection{The tree}
To study the process on the tree we need an asymptotically precise estimate for the probability that red can reach a site at distance $n$ in the $G=\mathbb Z$ case. We prove it here. 

\begin{lemma} \label{lem:Zn}
Let $A_n=A_n(\lambda)$ be the event that site $n$ is ever colored red in the chase-escape model on $\mathbb Z$, and set $a_n= \frac{[4p(1-p)]^n }{n^{3/2}}$, where $p = \lambda /(\lambda+1)$. 
\begin{enumerate}[\textup(i\textup )]
\item For some $c >0$ and all $n\geq1$, $P_\lambda(A_{n}) \geq c\, a_{n}$
\item If $\lambda <1$, then for some $C_{\lambda} >0$ and all $n\geq1$, $P_\lambda(A_{n}) \leq C_\lambda\, a_{n}$;
\end{enumerate}
\end{lemma}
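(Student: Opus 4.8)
The plan is to recast the statement as a first-passage question for the biased random walk introduced in Lemma~\ref{lem:Z}, and then extract the tail asymptotics from an explicit formula. First I would observe that chase-escape on $\mathbb Z$ splits into two independent one-sided processes, so for $n\ge1$ the event $A_n$ is determined by the rightward process, which is exactly the process on $\{-1,0,1,2,\dots\}$ with a blue particle at $-1$ and a red particle at $0$ analyzed in Lemma~\ref{lem:Z}. There, site $n$ is eventually red if and only if the red front advances at least $n$ times before the blue front catches it, i.e.\ if and only if the distance walk $(S_m)$ --- nearest-neighbor, $S_0=1$, absorbed at $0$, up-steps of probability $p=\lambda/(\lambda+1)$ --- makes at least $n$ up-steps before hitting $0$. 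Writing $T$ for the hitting time of $0$ (so $T=\infty$ with positive probability precisely when $\lambda>1$, in which case $A_n$ holds trivially), a count of up- and down-steps at time $T$ gives $T=2U+1$ with $U$ the number of up-steps performed, hence
\[
P_\lambda(A_n)=P_\lambda(T\ge 2n+1).
\]

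Next I would use the hitting-time theorem (equivalently the cycle lemma, or a ballot-type count) to write the first-passage law exactly as $P_\lambda(T=2k-1)=\frac{1}{2k-1}\binom{2k-1}{k}p^{k-1}q^k=:t_k$, with $q=1-p$. Thus $P_\lambda(A_n)=\sum_{k\ge n+1}t_k$ when $\lambda\le1$, with an extra additive term $1-q/p>0$ when $\lambda>1$. The engine of the estimate is the one-line identity
\[
\frac{t_{k+1}}{t_k}=\frac{(4k-2)\,pq}{k+1}=\Bigl(4-\frac{6}{k+1}\Bigr)pq<4pq\qquad(k\ge1),
\]
which gives $t_{n+1}\le\sum_{k\ge n+1}t_k$ for all $\lambda$, and, when $\lambda<1$ (so $4pq<1$), also $\sum_{k\ge n+1}t_k\le t_{n+1}\sum_{j\ge0}(4pq)^j=t_{n+1}/(1-4pq)$. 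So $P_\lambda(A_n)$ is trapped between $t_{n+1}$ and a $\lambda$-dependent multiple of $t_{n+1}$.

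Finally I would estimate $t_{n+1}$ by Stirling: since $\binom{2m}{m}$ is of order $4^m/\sqrt m$, one has $t_{n+1}=\frac{1}{2n+1}\binom{2n+1}{n+1}p^nq^{n+1}$ of order $q\,(4pq)^n/n^{3/2}=q\,a_n$. Combined with the previous paragraph, this yields (i) $P_\lambda(A_n)\ge t_{n+1}\ge c\,a_n$ for all $\lambda>0$ (with $c$ possibly depending on $\lambda$), and (ii) $P_\lambda(A_n)\le t_{n+1}/(1-4pq)\le C_\lambda a_n$ when $\lambda<1$.

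I expect the only real obstacle to be the first step: carefully setting up the dictionary ``site $n$ eventually red'' $\leftrightarrow$ ``$n$ up-steps of $S$ before absorption'' $\leftrightarrow$ $\{T\ge 2n+1\}$, and recognizing that the strict bias $\lambda<1$ (equivalently $4pq<1$) is genuinely needed for (ii) --- at $\lambda=1$ one has instead $P_1(A_n)$ of order $n^{-1/2}$, which dwarfs $a_n=n^{-3/2}$, so the lower bound (i) holds with enormous slack while the upper bound simply fails. Everything downstream --- the closed form for $t_k$, the ratio bound $t_{k+1}/t_k<4pq$ that makes the tail comparable to its leading term, and the central binomial estimate --- is routine.
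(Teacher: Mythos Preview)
Your argument is correct. Both you and the paper reduce $P_\lambda(A_n)$ to the tail $P(T>2n)$ of the absorption time of the biased walk started at $1$, but you then slice this probability differently. The paper decomposes according to the endpoint $S_{2n}=2a+1$, uses the reflection principle to count paths as $\binom{2n}{n+a}-\binom{2n}{n+a+1}$, and bounds the resulting sum via $\binom{2n}{n+a}\le\binom{2n}{n}$ together with the convergent series $\sum_{a\ge0}(2a+1)(p/q)^a$ (this is where $\lambda<1$ enters); the lower bound is the $a=0$ term, a Catalan number times $(pq)^n$. You instead decompose according to the value of $T$ itself, invoke the hitting-time formula $t_k=\frac{1}{2k-1}\binom{2k-1}{k}p^{k-1}q^k$, and control the tail $\sum_{k>n}t_k$ via the ratio bound $t_{k+1}/t_k<4pq$, which collapses the tail to a geometric multiple of its first term $t_{n+1}$. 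The two leading terms coincide up to a factor of $q$: indeed $t_{n+1}=q\cdot\frac{1}{n+1}\binom{2n}{n}(pq)^n$ is exactly $q$ times the paper's $a=0$ term. Your route is marginally slicker in that the geometric comparison is a one-line ratio computation, whereas the paper has to separately argue that the binomial coefficients are maximized at the center and that the weights $(2a+1)(p/q)^a$ sum; on the other hand the paper's decomposition generalizes a bit more readily if one later wants finer information about where the walk sits at time $2n$.
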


\begin{proof}
Let $(S_k)$ be the nearest-neighbor $p$-biased random walk as in \eqref{eq:p}. Note that the event $A_{n}$ is equivalent to the event that $S_{k}$ remains strictly positive for the first $2n$ steps, i.e., 
\[
P_\lambda(A_{n}) =P_\lambda(S_{k}\ge 1, k\le 2n|S_{0}=1) = \sum_{a=0}^{n}P_\lambda(S_{k}\ge 1, k\le 2n; S_{2n}=2a+1|S_{0}=1). 
\] 
Since $S_{2n}$ must have the same parity as $S_{0}$, we only considered the cases where $S_{2n}$ is odd. For any random walk path of length $2n$ from $1$ to $2a+1$ that does not hit zero, there must be $(n+a)$ steps to the right and $(n-a)$ steps to the left. The total number of such paths is 
\[
\binom{2n}{n+a}-\binom{2n}{n+a+1},
\] 
following a simple reflection principle. We then have
\begin{align}
\notag
P_\lambda(A_{n}) &= p^{2n}+ \sum_{a=0}^{n-1}\left[\binom{2n}{n+a}-\binom{2n}{n+a+1}\right] p^{n+a} (1-p)^{n-a}\\
\label{eqn:pan-est}
&=p^{2n}+p^{n} (1-p)^{n} \sum_{a=0}^{n-1}\binom{2n}{n+a} \frac{2a+1}{n+a+1}  \left(\frac{p}{1-p}\right)^{a}\\
\notag
&\le p^{2n} + \frac{p^{n}(1-p)^{n}}{n+1}\binom{2n}{n}\sum_{a=0}^{\infty}(2a+1)  \left(\frac{p}{1-p}\right)^{a}.
\end{align}
Since $\lambda < 1$, we have $p <\frac{1}{2} <1-p$, the summation above is finite. Moreover, 
$\frac{1}{n+1}\binom{2n}{n}$ is the $n$-th  Catalan number, which is known to be of order $\frac{4^n}{n^{3/2}}$ for large $n$. Putting $C_{\lambda} = \sum_{a=0}^{\infty}(2a+1)  \left(\frac{p}{1-p}\right)^{a}$, we have the desired upper bound. The lower bound is obtained by looking at the $a=0$ term in \eqref{eqn:pan-est}  and using the asymptotic behavior of Catalan numbers.
\end{proof}

\begin{proof}[Proof of Theorem \ref{thm:tree}] 

We consider the initial configuration in which the root is red and a special vertex $\mathfrak b$ attached to the root is blue. First note that if $\lambda >1$, then the distance between red and blue along an arbitrary path to $\infty$ is equivalent to chase escape on $\mathbb Z$. By Lemma \ref{lem:Z} we have $P_\lambda(A) >0$ in this case.

 Now, suppose that $\lambda \leq 1$.  Let $R_n$ be the number of sites at distance $n$ that are ever colored red and $R= \sum_{n=1}^{\infty} R_n$ be the total number of red sites. Notice that red survives a.s.\ if and only if it occupies infinitely many sites. Thus, $P_\lambda(A) = P_\lambda( R=\infty)$. We show that $P_\lambda(R=\infty) = 0$ for $\lambda$ small enough by proving $E R <\infty.$

For any vertex $v \in \mathbb T_d$, let $|v|$ denote its graph distance from the root. Let $A(v)$ be the event that $v$ is ever colored red. Since the tree has no cycles, we have $P_\lambda(A(v)) = P_\lambda(A_n)$ for any $v \in \mathbb T_d$ with $|v| = n$, with $A_n$ the event that red reaches a distance $n$ on a fixed path as in Lemma \ref{lem:Zn}. 
Linearity of expectation and the bound from Lemma \ref{lem:Zn} gives
\begin{align}
E R_n &= E\sum_{|v| = n} \mathbf 1_{A(v)} = \sum_{|v| = n} P_\lambda( A(v) ) \leq C_{\lambda}\frac{ d^n (4p(1-p))^n}{n^{3/2}}. \label{eq:S} 
\end{align}
Observe that $\lambda_c(d)$ is the smallest solution of 
$$4p(1-p)d =\f{4 d \lambda}{(1+ \lambda)^2} = 1.$$
It is straightforward to verify that $4dp(1-p)\leq 1$ for $\lambda \leq \lambda_c(d)$, and in this case $E R_n$ is summable, and thus $ER < \infty$. 

To prove that $P_\lambda(A) >0$ for $\lambda > \lambda_c(d)$, observe that the lower bound in Lemma \ref{lem:Zn} ensures that for some fixed, large $N$ we have $d^NP_\lambda(A_N) >1.$ Thus, the expected number of sites at distance $N$  that are ever colored red is strictly greater than $1$. When first occupied by red, the distance from each of these sites to the nearest blue particle is at least one. Since the tree has no cycles, the survival probability of chase-escape is monotonic on a tree; both respect to $\lambda$ and the initial distance blue starts from red. This means that moving the chasing blue particles to distance 1 from each of red site at distance $N$ will result in fewer surviving red particles at distance $2N$. Thus, the number of sites colored red at distances $N,2N,\hdots$ dominates  a Galton-Watson process with mean $d^N P_\lambda(A_N) >1$. This is supercritical, and thus $P_\lambda(R= \infty)>0$. 
\end{proof}

We can quickly deduce that the critical speed for any graph with less than $d^n$ self-avoiding paths of length $n$ is no smaller that of a $d$-ary tree. 

\begin{proof}[Proof of Corollary \ref{cor:paths}] 
For each $v \in G$, again let $A(v)$ be the event that $v$ is ever colored red. If $A(v)$ occurs, then along one self-avoiding path $\gamma\colon \rho \to v$, a red particle must be able to reach a distance $n$ from the root $\rho$ before blue catches it. We denote this event by $A^{\gamma}(v)$ and write
\[
 \mathbf 1_{A(v)} \le \sum_{\gamma:\rho\to v} \mathbf1_{A^{\gamma}(v)}
\]
Define $R_n = \sum_{v: |v| = n} \mathbf 1_{A(v)}$ to be the number of sites at distance $n$ that are ever colored red. Recall that $\Gamma_{n}$ is the collection  of all self-avoiding paths of length $n$ from the root. We have
$$R_n \le \sum_{v: |v|=n}\, \sum_{\gamma:\rho \to v} \mathbf1_{A^{\gamma}(v)} =  \sum_{\gamma \in \Gamma_n} \mathbf1_{A^{\gamma}({\gamma(n)})},$$
where $\gamma(n)$ is the $n$-th vertex along $\gamma$.
It follows from our hypothesis $|\Gamma_n| \leq d^n$  that $E R_n \leq d^n P_\lambda(A_n)$ with $A_n$ as in Lemma \ref{lem:Zn}. As in the proof of Theorem \ref{thm:tree}, if $\lambda \le \lambda_c(\mathbb T_{d})$, then $E R_n$ is summable. This implies that $P_\lambda(A) = 0$. 
\end{proof}

\subsection{The ladder}

\begin{proof}[Proof of Theorem \ref{thm:ladder}]

As with the path, it suffices to consider the one-sided ladder. Note that $\lambda_c(\mathbb Z_{+} \times \{0,1\}) \leq 1$, following a similar shape argument as in the introduction and the shape theorem for first passage percolation on one-dimensional graphs \cite[Theorem 1.1]{ahlberg2015asymptotics}. For the lower bound, we start by considering ordinary first-passage percolation with only one-type, that is, red particles spread at rate $\lambda$ and  blue particles are absent. We will say that empty sites have state $0$ and red sites have state $1$. Let $\xi_{t}(x,y)$ denote the state of site $(x, y)$ at time $t$:
$$
\xi_{t}(x, y) = \left\{
\begin{array}{ll}
0, &\text{if $(x, y)$ is red at time $t$}\\
1, &\text{if $(x, y)$ is empty at time $t$}
\end{array}.
\right.
$$
Let $\eta_t(x) = \xi_t(x,0) + \xi_t(x,1)$. If $\eta_t(x)  =1$ then we say that there is a \emph{hole} at distance $x$ at time $t$. Define $H_t := | \{ x \colon \eta_t(x) = 1 \} |$ to be the number of holes at time $t$.
The process $(H_t)_{t\ge 0}$ is dominated by a time-homogeneous Markov chain with jump rates
\begin{align*}
q(k-1, k) &= 2\lambda \\
q(k,k-1) &=k\lambda
\end{align*}
for all $k\ge 1$. This is because each hole can be filled by any site next to it.
A comparison with an asymmetric random walk, in which $k \to k-1$ happens at rate $3\lambda$ for all $k\ge 3$ and all other transition rates are held the same, shows that the first time to return to 0 (denoted by $T_{0}$) satisfies that for all $n$, $P_1(T_0 \ge n) \le ce^{-\gamma n}$ for some $c, \gamma > 0$.

For any $x, y \in \mathbb Z_{+}\times \{0, 1\}$, let $T(x, y)$ denote the passage time to travel to site $y$ starting from $x$. Define $$\nu := \displaystyle\lim_{n\to\infty} \frac{T(0, \mathcal H_{n})}{n}$$ to be the time constant, where $\mathcal H_{n} = \{(n, 0), (n, 1)\}$. Following an argument similar to \cite[Theorems 5.2 and 5.9]{Aspects} for the $\mathbb Z^{d}$ case, one can show that  for each $\epsilon > 0$, there are constants $A, B>0$ (depending on $\epsilon$) such that for all $n \ge 1$, 
\begin{align}
\label{eqn:large-dev}
&P_\lambda(\,|T(\mathbf 0, \mathcal H_{n}) -n\nu| \ge n \epsilon) < Ae^{-Bn}. 
\end{align}
Let $s_0=0$ and for $k\ge 0$, define
\begin{align*}
		\tau_k &:= \inf\{ t \ge s_k \colon H_t=0 \} ,\\ 
		s_{k+1} &:= \inf\{ t > \tau_k \colon H_t > 0\}.
\end{align*}
At each $\tau_k$, the red region contains no holes, we call this state a \emph{solid red block}, and we call each interval $[\tau_k,\tau_{k+1})$ a \emph{red cycle}.  Note that when blue particles are absent, $\{\tau_{k+1}-\tau_{k}\}_{k\ge 0}$ is stationary. Denote $\sigma:=E(\tau_{k+1}-\tau_{k})$.

Consider now the chase-escape model, where each site can be in one of the three states 0 (empty), 1 (red), and 2 (blue). At each time $t \ge 0$, let $\tilde R_t$ denote the first coordinate of the rightmost red particle and $\tilde B_t$ that of the rightmost blue particle. The definition of a ``hole'' is the same as above, and the sequences, $\{\tilde s_{k}\}_{k\ge 0}$ and $\{\tilde \tau_{k}\}_{k\ge 0}$, and red cycles $\{[\tilde \tau_{k}, \tilde \tau_{k+1})\}_{k\ge 0}$ are defined accordingly in the chase-escape setting.   

Red spreads at rate $\lambda$ with $\lambda <1$ and the blue at rate $1$. Suppose the process is in some configuration at $t=0$. It suffices to show that, independent of this configuration, there are constants $K > 0$ and $\delta > 0$ such that with probability at least $\delta$ at some later time $T < \infty$,  we have $\tilde R_{T}-\tilde B_{T} \le K$ and that $[\tilde B_{T}+1, \tilde R_{T}]\times \{0, 1\}$ is a solid red block.  It follows that such desired configuration will occur infinitely many times with probability one, and if this configuration appears, blue has a positive probability to take over the entire solid red block before it expands further. 

We will make use of a regenerative structure and we will explain the first step in full detail. Note that whenever blue takes over a red site, it never creates new holes, and by the same argument as in the red-only case, one can see that $P_\lambda(\tilde \tau_{0} < \infty)=1$, starting from any initial configuration. If $\tilde R_{\tilde \tau_{0}}-\tilde B_{\tilde \tau_{0}} \le K$, then we are done. Now suppose at $\tilde \tau_{0}$, we have $\tilde R_{\tilde \tau_{0}}-\tilde B_{\tilde \tau_{0}} = M > K$. 
We run the chase-escape model for another
$$N_{1}:=\left\lfloor\frac{M(1-2\epsilon)\lambda \nu}{\sigma}\right\rfloor$$ 
red cycles. Let $R_{t}$  and $\tau_{k}$ be defined as above for the one-type case at rate $\lambda$, and let $B_{t}$ denote the frontier at time $t$ of another independent one-type process of rate-$1$. These two one-type models are coupled with the two-type chase-escape model $(\tilde R_{t}, \tilde B_{t})_{t\ge 0}$ so that (i) passage times for $(R_{t})_{t\ge 0}$ (resp., $(B_{t})_{t\ge 0}$) are the same as those used by the red  (resp., blue) particles in $(\tilde R_{t}, \tilde B_{t})_{t\ge 0}$; (ii) $(B_{t})_{t\ge 0}$ starts from the same configuration as $\tilde B_{\tilde \tau_{0}}$ and (iii) $(R_{t})_{t\ge 0}$ starts from a solid red block that spans from $\mathcal H_{0}$ to $\mathcal H_{\tilde R_{\tilde \tau_{0}}}$ (and hence $\tau_{0}=0$). \\
We observe that
\begin{align}
\label{eqn:indep-red-blue}
&\ P_\lambda(\tilde R_{\tilde \tau_{N_{1}}}-\tilde B_{\tilde \tau_{N_{1}}} \le M\alpha \mid \tilde R_{\tilde \tau_{0}}-\tilde B_{\tilde \tau_{0}} = M  ) \\
\notag
 \ge & \ P_{\lambda}(R_{\tau_{N_{1}}-\tau_{0}}-B_{\tau_{N_{1}}-\tau_{0}} \le M\alpha-M,\ B_{\tau_{N_{1}}-\tau_{0}} < M).
\end{align}
This is because in the right side of the inequality, $\{B_{\tau_{N_{1}}-\tau_{0}} < M\}$ corresponds to the event that in the two independent one-type models, $(R_{t})_{t\ge 0}$ finishes $N_{1}$ red cycles before the blue frontier $B_{t}$ advances for a distance $M$. 
Then, choose any $\epsilon < \frac{1-\lambda}{4}$ and $\alpha := 1-4\lambda \epsilon^{2}\in (0, 1)$ fixed,  we have \eqref{eqn:indep-red-blue} is at least 
\begin{align*}
P_\lambda\left(R_{\tau_{N_{1}}-\tau_{0}} <\lambda M(1-4\epsilon^{2}),\  M(1-4\epsilon) \le B_{\tau_{N_{1}}-\tau_{0}} \le M\right). 
\end{align*}
Since $\tau_{k}-\tau_{k-1}$ has an exponential tail, the total time $(\tau_{N_{1}}-\tau_{0})$ satisfies
\[
P_\lambda(|\tau_{N_{1}}-\tau_{0} - N_{1}\sigma| \le \epsilon N_{1} \sigma) \ge 1-e^{-CN_{1}}, \quad \text{for some } C>0. 
\]
On the event $\{|\tau_{N_{1}}-\tau_{0}-N_{1}\sigma| < \epsilon N_{1}\sigma\}$, 
the blue displacement is within the interval 
$$\left[\frac{N_{1}(1-2\epsilon)\sigma }{\lambda \nu}, \ \ \frac{N_{1}(1+2\epsilon)\sigma }{\lambda\nu}\right] \subseteq \left[M(1-4\epsilon), M\right] $$ 
with probability $\ge 1-C_{2}e^{-C_{3}N_{1}}$. We thus have 
\begin{align*}
&\ \ P_\lambda\left(R_{\tau_{N_{1}}-\tau_{0}} <\lambda M(1-4\epsilon^{2}),\  M(1-4\epsilon) \le B_{\tau_{N}-\tau_{0}} \le M\right)\\
\ge & P_\lambda\left(R_{N_{1}\sigma(1+\epsilon)} <\lambda M(1-4\epsilon^{2}),\  |\tau_{N_{1}}-\tau_{0}-N_{1}\sigma| < \epsilon N_{1}\sigma \right) (1-C_{2}e^{-C_{3}N})
\end{align*}

According to \eqref{eqn:large-dev}, the red frontier extends by a distance at most $N_{1}\sigma(1+2\epsilon)/\nu$ with a probability $\ge 1-Ae^{-BN_{1}}$. 
Combining all these estimates above, we have for some $C_{0}=C_{0}(\epsilon), \theta = \theta(\epsilon)$, with probability at least 
\[ 
(1-C_{2}e^{-C_{3}N_{1}}) (1-Ae^{-BN_{1}}-e^{-CN_{1}}) = 1-C_{0}e^{-\theta M},
\]
the distance between the red and blue frontiers after $N_{1}$ red cycles is less than $M\alpha$, and moreover, at $\tilde \tau_{N_{1}}$, the rightmost blue particle is following a solid red block.  
We iterate this procedure for $s_{M} :=\min\{j\colon M\alpha^{j} \le K\}$ times. For any $M > K$, after $s_{M}$ iterations, the distance between the red and blue frontiers is no more than $K$ with probability at least 
\begin{align*}
\delta_{0} := \inf_{M>K}(1-C_{0}e^{-\theta M})(1-C_{0}e^{-\theta M\alpha})\cdots (1-C_{0}e^{-\theta M\alpha^{s_{M}-1}}) > 0.
\end{align*}
Therefore,  we have
\begin{align*}
& P(\text{at some } T< \infty, \tilde R_{T} -\tilde B_{T} \le K)\\
\ge& \sum_{M=K+1}^{\infty} P(\text{at some } T< \infty, \tilde R_{T} -\tilde B_{T} \le K | \tilde R_{\tilde \tau_{0}} - \tilde B_{\tilde \tau_{0}} = M)P( \tilde R_{\tilde \tau_{0}} - \tilde B_{\tilde \tau_{0}} = M)\\
& \qquad + \ P( \tilde R_{\tilde \tau_{0}} - \tilde B_{\tilde \tau_{0}} \le  K)\vspace*{3mm}\\
\ge &\  \delta_{0}[1-P( \tilde R_{\tilde \tau_{0}} - \tilde B_{\tilde \tau_{0}} \le  K)] + P( \tilde R_{\tilde \tau_{0}} - \tilde B_{\tilde \tau_{0}} \le  K) > \delta >0,
\end{align*}
and the proof is complete.
\end{proof}

\section{Percolation-like chase-escape}
Before getting to the proof of Theorem \ref{thm:site_perc}, we discuss the chase-escape dynamics on the infinite path $[-1,\infty)\cap \mathbb Z$ with $-1$ blue and $0$ red initially. Suppose that all red passage times are $1$ and that blue chases according to \eqref{eq:percB}. Let $t_i^B$ be the passage time for blue along the edge $(i-1,i)$. Define
$$T_n = \sum_{i=0}^n t_i^B.$$ We will suppress the dependence on $p$ and $m$ during the proof and denote the probability measure on this process by $P(\cdot) = P_{p,m}( \cdot)$. 
\begin{lemma} \label{lem:tau} 
Let $B = \{t_0 = m\}$. For any $\epsilon>0$, there exists $M = M(\epsilon)$ such that  for all $n,m>  M$,
\begin{align}
\label{eqn:lemma8}
P(T_n \leq n+1 \mid B) \leq (p+\epsilon)^{n}.
\end{align}
\end{lemma}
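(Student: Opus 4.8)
The plan is to observe that, conditionally on $B=\{t_0=m\}$, the quantity $T_n$ is a deterministic multiple of a binomial count, and then apply a standard large‑deviation bound for that count. Since the blue passage times $t_0,t_1,\dots$ are i.i.d.\ with law $p\delta_0+(1-p)\delta_m$ and are attached to distinct edges, conditioning on $B$ fixes $t_0=m$ and leaves $t_1,\dots,t_n$ i.i.d.\ with the same law. Writing $N$ for the number of indices $i\in\{1,\dots,n\}$ with $t_i=m$, we have $N\sim\Bin(n,1-p)$ and, on $B$, $T_n=m+mN=m(N+1)$. Hence $\{T_n\le n+1\}=\{N+1\le (n+1)/m\}$, which is the event $\{N\le k\}$ with $k:=\lfloor (n+1)/m\rfloor-1$. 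If $m>n+1$ then $k<0$ and the probability is $0$, so we may assume $k\ge 0$; note $k<(n+1)/m\le 2n/m$ for $n\ge 1$, so $k/n<2/m$.

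Next I would bound $P(N\le k)$ by a union bound: on $\{N\le k\}$ at least $n-k$ of $t_1,\dots,t_n$ equal $0$, so this event is contained in $\bigcup_{|S|=n-k}\{t_i=0 \text{ for all } i\in S\}$, and by independence
\[
P(N\le k)\le \binom{n}{n-k}p^{n-k}=\binom{n}{k}p^{n-k}.
\]
Using the entropy bound $\binom{n}{k}\le e^{nH(k/n)}$ with $H(x)=-x\ln x-(1-x)\ln(1-x)$ gives $\binom{n}{k}p^{n-k}=p^n\big(e^{H(k/n)}p^{-k/n}\big)^n$. The map $x\mapsto e^{H(x)}p^{-x}$ is continuous and increasing on $[0,\tfrac12]$ with value $1$ at $x=0$, so one can pick $a\in(0,\tfrac12)$ with $e^{H(a)}p^{-a}\le 1+\epsilon/p$, and then by monotonicity $e^{H(x)}p^{-x}\le 1+\epsilon/p$ for all $x\in[0,a]$. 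Set $M:=2/a$. For any $n\ge1$ and $m>M$ with $k\ge0$ we have $k/n<2/m<a$, hence
\[
P(T_n\le n+1\mid B)\le p^n(1+\epsilon/p)^n=(p+\epsilon)^n,
\]
which is \eqref{eqn:lemma8} (and the case $k<0$ is covered since the probability is then $0$).

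There is no serious obstacle here; the only point requiring care is that the threshold $M$ must depend on $\epsilon$ and the fixed $p$ alone, not on $n$. This is exactly what the bound $k/n<2/m$ supplies, since it forces $k/n$ to be small uniformly in $n$ once $m$ is large. One could alternatively replace the union bound and entropy estimate by a direct Chernoff bound $P(N\le k)\le \inf_{s\in(0,1)}s^{-k}(p+(1-p)s)^n$ optimized near $s=0$; the bookkeeping is slightly heavier but the conclusion is identical.
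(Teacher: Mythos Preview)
Your proof is correct and follows essentially the same approach as the paper: both reduce to showing that a $\Bin(n,1-p)$ variable is at most $\approx n/m$ with probability $\le(p+\epsilon)^n$, and both control $\binom{n}{k}p^{n-k}$ for $k/n$ small via Stirling/entropy. Your union-bound-plus-entropy packaging is slightly cleaner than the paper's monotonicity-of-the-pmf argument, but the substance is the same.
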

\begin{proof} 
If there are more than $(n+1-m)/m$ edges with passage time $m$ then $T_n > n+1$ conditional on $B$. Let $a_{n,m} = \lceil (n+1-m)/m \rceil$ and $X \overset{d} = \Bin(n,1-p)$. It follows from the previous observation that $$P(T_n \leq n+1 \mid B) \leq P( X \leq a_{n,m}).$$
 Taking $m\geq 2$, it follows from a union bound and that $P(X=k)$ is an increasing function for $k \in [0,n/2]$ that 
$$P( X \leq (n+1)/m) \leq a_{n,m} P(X = a_{n,m}) = a_{n,m} \binom{n}{a_{n,m}} p^{n-a_{n,m}}.$$
Stirling's formula implies that given $\epsilon'>0$, there exists a $\delta>0$ such that for all $n> N(\delta)$ it holds that
$$(n+1)\binom{n}{\delta n} \leq (1+ \epsilon ')^n.$$
Let $\epsilon'$ and $\delta$ be small enough so  that $(1+ \epsilon')p^{1-\delta} < (p+ \epsilon).$ Take $M(\delta)$ large enough so that, for $m> M(\delta)$, we have $a_{n,m} \leq \delta n$ for all $n \geq N(\delta)$. Setting $M = \max \{ N(\delta), M(\delta)\}$ gives for $n,m > M$ 
$$P( X \leq a_{n,m}) \leq [(1+ \epsilon') p^{( 1- \delta))}]^n\leq (p+\epsilon)^n.$$ 
Since this is also a bound for $P(T_n \leq n+1\mid B)$, the proof is complete.
\end{proof}

\begin{proof}[Proof of Theorem \ref{thm:site_perc}]The idea is to set up and then apply a union bound that uses the one-dimensional bound in Lemma \ref{lem:tau} and also an observation that bounds the probability of a contiguous path in the cluster of sites accessible by red via $1$-passage times.
 
First we must choose $p$ and $d$ appropriately. Let $p = d^{-1} + d^{-2}$ and $d>3$ be large enough so that $p > p_c(d)$ and also 
\begin{align}1-(1-p)^d < 1- 3d^{-1}.\label{eq:d}\end{align}
 Such a $d$ exists by the main result of \cite{oriented}, which proved $p_c(d) = d^{-1} + d^{-3} + O(d^{-4}).$ 
Furthermore, choose $\epsilon>0$ such that for some $\delta >0$
\begin{align}d (d^{-1} + d^{-2}+\epsilon) ( 1- 3d^{-1}) = 1-\delta\label{eq:epsilon}	.
\end{align}

We describe two events whose occurrence is necessary for red to survive. We must condition on the event $B= \{t^B_{\mathfrak e} =m\}$ i.e., that the first blue edge has passage time $m$. Otherwise, red is instantly caught. Let $\mathcal C$ be the cluster of vertices connected to $0$ via red edges with passage time $1$ and $C = \{ |\mathcal C| = \infty\}$. Our choice of $p$ is larger than $p_c(d)$, thus $P(C)  >0$. Since red can only survive if it reaches infinity in the absence of being chased and $B$ and $C$ are independent, we have
\begin{align}
P(A) =  P(A\mid B,C) P(B)P(C). \label{eq:Rcond}	
\end{align}
As $P(B) = 1-p$, our result will follow from proving that $P(A \mid B,C) \to 1$ as $m \to \infty$. 

Let $\gamma_n$ be an oriented path started at $\0$ with $n$ vertices and edges. Define the event $R(\gamma)$ that every vertex of $\gamma$ belongs to $\mathcal C$. Let $T(\gamma_n)$ be the passage times for blue along the edges of $\gamma_n$. If blue never reaches the red boundary, then red spreads as if it is not being chased and survives on the event $C$. Since red spreads at unit speed along $\mathcal C$, we can express this idea formally as
$$P(A\mid B,C) \geq 1 - P( \cup_{n \geq 1, \gamma_n}  \{T(\gamma_n) \leq n+1\} \cap R(\gamma_n) \mid B,C).$$
Using a union bound and independence of red and blue passage times gives
\begin{align*}P (A \mid B,C) \geq 1 - \sum_{n \geq 1}\sum_{\gamma_n} P(T(\gamma_n) \leq n+1 \mid B) P(R(\gamma_n) \mid C).
\end{align*}
Noting that $P(T(\gamma_n) \leq n +1 \mid B) = 0$ for $n <m$,  we then have
$$P (A \mid B,C) \geq 1 - \sum_{n\geq m} \sum_{\gamma_n } P(T(\gamma_n) \leq n+1 \mid B) P(R(\gamma_n) \mid C).$$

The $T(\gamma_n)$ are identically distributed with the same distribution as $T_n$ from Lemma \ref{lem:tau}, and there are $d^n$ oriented paths of length $n$. So, the above line reduces to
$$P(A \mid B,C) \geq 1 - \sum_{n \geq m} d^n P(T_n \leq n+1 \mid B) P(R(\gamma_n) \mid C).$$
It then follows from the bound in Lemma \ref{lem:tau} that for any $\epsilon>0$ and $m$ sufficiently large we have
\begin{align}P(A \mid B,C) \geq 1 - \sum_{n \geq m} d^n (p+ \epsilon)^n P(R_x \mid C).\label{eq:A}\end{align}

 A simple way to bound $P(R(\gamma_n)\mid C)$ is to observe that $R(\gamma_n)$ can only occur if each vertex of $\gamma_n$ has at least one incident edge with red passage time $1$. This occurs with probability $1-(1-p)^d$ at each of the $n$ vertices of $\gamma_n$. Let $c = 1/P(C)$. Using the trivial bound $P(R(\gamma_n) \cap C) \leq P(R(\gamma_n))$ and the formula for conditional probability, it follows that for $\|x\|_1 = n$
\begin{align*}
P(R(\gamma_n) \mid C) \leq  \f{P(R(\gamma_n))}{P(C)} \leq c(1- (1-p)^d)^n .
\end{align*}
Our choice of $d$ at \eqref{eq:d} guarantees that $1 - (1-p)^d < 1- 3d^{-1}$.  Thus, 
\begin{align}
P(R(\gamma_n) \mid C) \leq c (1- 3d^{-1})^n \label{eq:R}.	
\end{align}

Applying \eqref{eq:R} to \eqref{eq:A} and recalling that $p= d^{-1} + d^{-3}$ gives 
$$P(A \mid B,C) \geq 1- c\sum_{n\geq m} [d (d^{-1} + d^{-3}+\epsilon) ( 1- 3d^{-1})]^n.$$
Our choice of $\epsilon$ at \eqref{eq:epsilon} yields
$$P(A \mid B,C) \geq 1- c \sum_{n \geq m} (1- \delta)^n.$$
This lower bound converge to $1$ as $m\to \infty$, which completes the proof.
\end{proof}

\bibliographystyle{amsalpha}
\bibliography{chase_escape}

\end{document}